\title{The Arnoux--Yoccoz mapping classes via Penner's construction}
\author{Livio Liechti}
\thanks{The first author was supported by the Swiss National Science Foundation~(grant nr.\ 175260)}
\address{D\'epartement de Math\'ematiques, Universit\'e de Fribourg, Chemin du Mus\'ee 23, 1700 Fribourg, Suisse}
\email{livio.liechti@unifr.ch}
\author{Bal\'azs Strenner}
\address{Georgia Institute of Technology, School of Mathematics, Atlanta GA
  30332, USA}
\email{strenner@math.gatech.edu}
\email{strennerb@gmail.com}
\newcommand{\R}{\mathbf{R}}
\newcommand{\Q}{\mathbf{Q}}
\newcommand{\C}{\mathbf{C}}
\newcommand{\calF}{\mathcal{F}}
\newtheorem{theorem}{Theorem}[section]
\newtheorem{proposition}[theorem]{Proposition}
\newtheorem{lemma}[theorem]{Lemma}
\begin{document}

\begin{abstract}
  We give a new description of the Arnoux--Yoccoz mapping classes as a product of
  two Dehn twists and a finite order element. The construction is analogous to Penner's
  construction of mapping classes with small stretch factors.
\end{abstract}

\maketitle

\section{Introduction}

The mapping class group of a surface~$S$ is the group of isotopy classes of
orientation-preserving homeomorphisms of~$S$. Motivated by studying geometric
structures on 3-manifolds, Thurston~\cite{Thurston88} modernized the theory of
mapping class groups in the 1970s by giving a classification of elements into
three types: finite order, reducible and pseudo-Anosov. This article is
concerned about the third type. A mapping class is \emph{pseudo-Anosov} if it
has a representative homeomorphism~$\phi$ and singular measured foliations~$\calF^u$ 
and~$\calF^s$ on~$S$ such that~$\phi(\calF^u) = \lambda \calF^u$ and
$\phi(\calF^s) = \lambda^{-1}\calF^s$ for some~$\lambda >
1$. The number~$\lambda$ is independent of choice of the
representative homeomorphism and it is called the \emph{stretch factor} or
\emph{dilatation} of the pseudo-Anosov mapping class.

Thurston showed that stretch factors of pseudo-Anosov mapping classes of the
closed orientable surface~$S_g$ are algebraic integers with degree bounded
above by~$6g-6$. He claimed without proof in~\cite{Thurston88} that the degree~$6g-6$ 
was realizable, but this statement was only recently proven in~\cite{StrennerDegrees}. 
For some time, however, even the fact that
pseudo-Anosov stretch factors of arbitrarily large degrees exist was not
justified. This fact was first shown by Arnoux and Yoccoz~\cite{ArnouxYoccoz81}
in 1981. They constructed a pseudo-Anosov mapping class~$\tilde{h}_g$ on $S_g$
for each~$g\ge 3$ with a stretch factor of algebraic degree~$g$. We will recall
the construction in \Cref{sec:ay-or} and give more reasons for why
the Arnoux--Yoccoz examples are of importance after stating the main results.

Despite the mapping classes~$\tilde{h}_g$ being probably the single most widely
studied explicit family of pseudo-Anosov mapping classes, to this day, no
constructions were known other than the original approach by Arnoux
and Yoccoz.

The goal of the paper is to present a new description as a product of two Dehn
twists and a finite order mapping class. 
We hope that this new description will shed new light on the examples and help
construct new analogous families of mapping classes that might also serve as
interesting examples. An alternative description of the
Arnoux--Yoccoz mapping classes was also asked for by Margalit in Section 10 
of~\cite{MargalitProblems}.

\begin{theorem}
\label{thm:arnoux-yoccoz}
The Arnoux--Yoccoz mapping class~$\tilde{h}_g$ on the surface~$S_g$ is
conjugate to~$\tilde{f}_g = r \circ T_a \circ T_b^{-1}$, where~$T_a$ and~$T_b^{-1}$ 
are positive and negative Dehn twists about the curves~$a$ and~$b$
pictured on \Cref{fig:ay_penner}, and~$r$ is a rotation of the surface by one
click in either direction.
  \begin{figure}[htb]
    \labellist
    \small\hair 2pt
    \pinlabel {$a$} at 33 8
    \pinlabel {$b$} at 58 10
    \endlabellist
    \centering
    \includegraphics[width=0.4\textwidth]{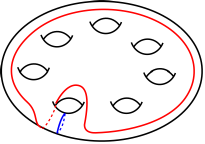}
    \caption{The surface~$S_g$ with a rotational symmetry of order~$g$. This
      figure shows the case~$g=7$.}
    \label{fig:ay_penner}
  \end{figure}
\end{theorem}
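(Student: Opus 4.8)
The plan is to prove \Cref{thm:arnoux-yoccoz} in two stages. In the first stage I establish that $\tilde f_g = r\circ T_a\circ T_b^{-1}$ is pseudo-Anosov by recognizing it as an instance of Penner's construction carried out in the presence of the finite-order symmetry $r$. In the second stage I identify it with the Arnoux--Yoccoz mapping class by matching the associated invariant singular flat structures and dilatations.

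\emph{Stage one.} Let $A$ and $B$ be the multicurves obtained as the $r$-orbits of $a$ and of $b$, where $r$ has order $g$. The first thing to check --- by inspection of \Cref{fig:ay_penner} --- is that $A\cup B$ fills $S_g$, i.e.\ every complementary region is a disk. Using $r\,T_c\,r^{-1}=T_{r(c)}$ together with $r^g=\mathrm{id}$, the power $\tilde f_g^{\,g}$ simplifies to a product of Dehn twists in which every component of $A$ is twisted once positively and every component of $B$ once negatively. By Penner's criterion this product is pseudo-Anosov, and hence so is $\tilde f_g$ itself. Penner's construction moreover produces an explicit $\tilde f_g$-invariant train track $\tau$ --- the standard one attached to the filling pair $(A,B)$, which carries the $r$-action --- together with the transition matrix $M$ for the induced action on weights, whose Perron--Frobenius eigenvalue is the dilatation $\lambda(\tilde f_g)$.

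\emph{Stage two.} I would first compute $M$ and check that its Perron--Frobenius eigenvalue is $\lambda(\tilde h_g)$, the Arnoux--Yoccoz dilatation: the largest root of $x^g - x^{g-1} - \cdots - x - 1$, whose reciprocal is the root in $(0,1)$ of $x + x^2 + \cdots + x^g = 1$. Since equality of dilatations does not imply conjugacy, the core of the argument is to extract from the Penner data the invariant quadratic differential of $\tilde f_g$: its stratum (the orders of the zeros, read off from the complementary regions of $\tau$) and the flat cylinder decompositions in the horizontal and vertical directions, together with the cylinder moduli. One then compares this with the classical flat model of the Arnoux--Yoccoz example coming from the self-similar Arnoux--Yoccoz interval exchange, and exhibits an affine isomorphism of flat surfaces intertwining $\tilde f_g$ with $\tilde h_g$; the fact that either direction of $r$ may be used should follow from a symmetry of \Cref{fig:ay_penner} relating the two choices. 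A comparison of the characteristic polynomials of the actions on $H_1(S_g)$ gives a useful consistency check along the way.

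The step I expect to be the main obstacle is this last identification. Showing that $\tilde f_g$ is pseudo-Anosov and computing its dilatation is essentially bookkeeping once the configuration in \Cref{fig:ay_penner} is pinned down; the real work is to confirm that the invariant flat structure is exactly the Arnoux--Yoccoz one, rather than that of some other pseudo-Anosov map with the same dilatation. This requires a careful analysis of the branches of $\tau$ and of its complementary regions, most naturally organized either as an induction on $g$ or by working in coordinates adapted to the cyclic symmetry, so that the polygon underlying the Penner flat structure becomes visibly the one used by Arnoux. A further, more mundane subtlety is the bookkeeping of signs and orientations: making sure that the twist directions in the statement are precisely those for which Penner's construction applies, and that the resulting mapping class is conjugate to $\tilde h_g$ itself and not to its inverse or to a proper power.
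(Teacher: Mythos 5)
Your stage one is fine (and is essentially the remark the paper itself makes: $\tilde f_g^{\,g}$ is a Penner product over the filling pair $A\cup B$, so $\tilde f_g$ is pseudo-Anosov), but stage two, which is the entire content of the theorem, is not actually an argument --- it is a statement of what would need to be proved. Matching the dilatation, the stratum, the cylinder decompositions and their moduli does not pin down the mapping class: distinct pseudo-Anosov classes can share all of these invariants, and even two affine pseudo-Anosov maps of the \emph{same} flat surface with the same derivative need not be conjugate (they can differ by composition with an automorphism of the flat surface preserving both foliations). So ``exhibit an affine isomorphism intertwining $\tilde f_g$ with $\tilde h_g$'' is precisely the missing step, and your proposal offers no mechanism for producing it beyond the insufficient numerical comparison. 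What is needed is a rigidity statement plus a finite check: the paper proves a uniqueness claim saying that $h_g$ is determined by its unstable foliation together with a transverse one-sided curve $\gamma$ and its image $h_g(\gamma)$ (the point being that the interval exchange induced on the boundary of a neighborhood of $\gamma$ has no symmetries), and then verifies by direct picture-tracing that $f_g$ has the same foliation, up to scale, as the model in \cite[Figure 5.1]{StrennerSAF} and sends $\gamma$ to the same curve $\gamma'$. Without some such rigidity input and an explicit verification of where a reference curve goes, your plan cannot distinguish $\tilde h_g$ from another map with the same flat data, nor from $\tilde h_g^{-1}$ --- a real danger here, since the paper points out that reversing one choice in \Cref{fig:ay_penner} does produce the inverse class.

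It is also worth noting that the paper's route is structurally different from yours: it does not work on $S_g$ directly. It first proves the analogous statement on the nonorientable quotient $N_{g+1}$ (where the construction is a single twist $T_c$ composed with $r$, the invariant bigon track has only $g$ branches to weight, and the foliation can be matched against the known model of $h_g$ from \cite{StrennerSAF}), and then deduces \Cref{thm:arnoux-yoccoz} by lifting through the orientable double cover, checking that $T_c$ lifts to $T_a\circ T_b^{-1}$ with the stated signs and that $r$ lifts to the one-click rotation. If you want to carry out your all-upstairs approach, you would have to redo the foliation identification and the image-of-$\gamma$ check (or an equivalent rigidity argument) on $S_g$ itself, where the combinatorics of the $2g$ curves and of the invariant track are roughly twice as heavy; the quotient-then-lift strategy is what makes the identification tractable.
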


For the proof, we use the fact, shown by the second author in~\cite[Section
5]{StrennerSAF}, that the mapping classes~$\tilde{h}_g$ arise as lifts of
mapping classes on nonorientable surfaces. More precisely, there is a
pseudo-Anosov mapping class~$h_g$ (see \Cref{sec:ay-nonor} for the definition)
on the closed nonorientable surface~$N_{g+1}$ of genus~$g+1$ for each~$g\ge 3$
such that~$\tilde{h}_g$ is the lift of~$h_g$ by the double cover~$S_g \to N_{g+1}$. 
We will deduce \Cref{thm:arnoux-yoccoz} from the following.

\begin{theorem}
\label{thm:arnoux-yoccoz-nonor}
The nonorientable Arnoux--Yoccoz mapping class~$h_g$ on the surface~$N_{g+1}$
is conjugate to~$f_g = r \circ T_c$, where~$T_c$ is a Dehn twist about the two-sided
curve~$c$ pictured on \Cref{fig:ay_penner-nonor}, and~$r$ is a rotation of the
surface by one click in either direction.
  \begin{figure}[htb]
    \labellist
    \small\hair 2pt
    \pinlabel {$c$} at 57 60
    \endlabellist
    \centering
    \includegraphics[scale=0.7]{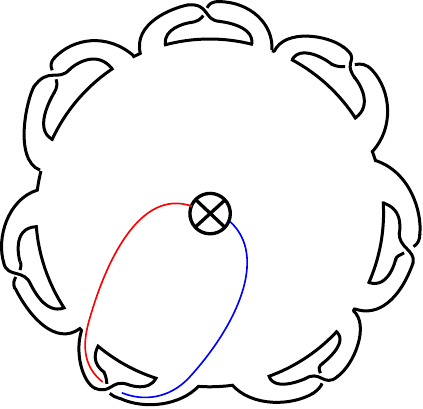}
    \caption{The circle with an X inside it indicates a crosscap: the inside of
      the circle is not part of the surface and antipodal points of the circle
      are identified. A disk with one crosscap is therefore a M\"obius
      strip. So this figure shows a nonorientable surface obtained by attaching~$g$ 
      twisted bands to the boundary of a M\"obius strip. The surface has one
      boundary component. By gluing a disk to the boundary component, we obtain
      the closed surface~$N_{g+1}$.}
    \label{fig:ay_penner-nonor}
  \end{figure}
\end{theorem}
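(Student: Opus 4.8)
The plan is to establish \Cref{thm:arnoux-yoccoz-nonor} directly and then obtain \Cref{thm:arnoux-yoccoz} from it by lifting along the orientation double cover $S_g\to N_{g+1}$; this sketch concentrates on the nonorientable statement. There are two inputs. First, an explicit $f_g$-invariant train track on $N_{g+1}$ together with the transition matrix of the carrying map, built out of the curve $c$ of \Cref{fig:ay_penner-nonor} in the spirit of Penner's construction. Second, the description of $h_g$ as a pseudo-Anosov map with a concrete invariant train track and transition matrix, which is exactly the content of Section~5 of \cite{StrennerSAF}. The conjugacy then follows by identifying these two pieces of data, via the fact that a pseudo-Anosov mapping class is determined up to conjugacy by the topological type of its invariant pair of measured foliations (the classification theorem holding on nonorientable surfaces as well).

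First I would study $f_g=r\circ T_c$ combinatorially. Since $r$ has order $g$, the power $f_g^{\,g}=(r\circ T_c)^g$ is a product of the Dehn twists about the curves in the $r$-orbit $\{c,r(c),\dots,r^{g-1}(c)\}$. I would check that this orbit fills $N_{g+1}$, smooth the union of these curves to a train track $\tau$ as in Penner's construction, and verify that $f_g$ carries $\tau$ to itself, reading off the transition matrix. The point that makes the construction ``analogous to Penner's'' despite all the twists being nominally positive is that the nonorientability of $N_{g+1}$ supplies the sign change that Penner's construction needs on an orientable surface: on the double cover, the $r$-orbit of $c$ lifts to a filling collection on which the lifted twists genuinely alternate in sign, so that $\tilde f_g^{\,g}$ is a bona fide Penner map. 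In particular this shows $f_g$ is pseudo-Anosov and exhibits its stretch factor as the Perron--Frobenius eigenvalue of the resulting matrix.

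Next I would compare with the data of \cite[\S5]{StrennerSAF}, where $h_g$ is realized by an invariant train track whose transition matrix carries the Arnoux--Yoccoz self-similarity, with stretch factor the reciprocal of the unique root in $(0,1)$ of $x^g+x^{g-1}+\cdots+x-1$. The core of the argument is to produce a homeomorphism of $N_{g+1}$ carrying the train track $\tau$ above to this one and intertwining the carrying maps; equivalently, identifying the invariant foliation pairs of $f_g$ and $h_g$ up to homeomorphism and scale. Granting this, the classification of pseudo-Anosov maps yields that $f_g$ and $h_g$ are conjugate, proving \Cref{thm:arnoux-yoccoz-nonor}.

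I expect this last comparison to be the main obstacle: the curve $c$ and the rotation $r$ are described by a picture, whereas $h_g$ originates from an interval exchange transformation, so the identification requires a careful dictionary between the band model of \Cref{fig:ay_penner-nonor} and the combinatorics of \cite[\S5]{StrennerSAF} --- in practice, normalizing $\tau$ and checking equality of transition matrices up to the ambiguities coming from the choice of rotation direction and the labeling of branches. With \Cref{thm:arnoux-yoccoz-nonor} established, \Cref{thm:arnoux-yoccoz} is routine: the two-sided curve $c$ lifts to two disjoint curves $a$ and $b$ exchanged by the orientation-reversing deck transformation, which therefore conjugates $T_a$ to $T_b^{-1}$, so $T_c$ lifts to $T_a\circ T_b^{-1}$; since $r$ lifts to a rotation of $S_g$, the map $\tilde f_g=r\circ T_a\circ T_b^{-1}$ is a lift of $f_g$ and hence is conjugate to the lift $\tilde h_g$ of $h_g$.
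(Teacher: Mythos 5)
Your first half (smoothing the $r$-orbit of $c$ to a Penner-style invariant bigon track, reading off the transition matrix, getting the stretch factor as the Perron--Frobenius root of $x^g-x^{g-1}-\cdots-x-1$) matches the paper's proof. The gap is in how you propose to conclude. The principle you invoke --- that a pseudo-Anosov mapping class is determined up to conjugacy by the topological (projective) type of its invariant foliation pair --- is not valid as stated: all powers of a pseudo-Anosov share the same foliation pair, and even after fixing the dilatation the map is pinned down only up to finite-order symmetries of the foliations, so some extra rigidity input is required. The paper supplies exactly this input with its Claim: $h_g$ is uniquely determined by its unstable foliation \emph{together with} a one-sided curve $\gamma$ (the core of the crosscap) and its image $\gamma'=h_g(\gamma)$, the uniqueness being proved via the induced interval exchange maps on the boundary of a neighborhood of $\gamma$, which have no symmetries. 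Correspondingly, the paper must also verify that $f_g(\gamma)=\gamma'$ by explicitly applying $T_c$ and then $r$ to $\gamma$; this step is absent from your outline, and without it (or some substitute rigidity argument) matching the foliations alone does not give conjugacy. A related technical point: the smoothing produces a \emph{bigon} track on a nonorientable surface, so one cannot simply quote Penner's theorem to get that $f_g$ is pseudo-Anosov with the stated foliation; the paper sidesteps this by never proving pseudo-Anosovity of $f_g$ independently --- it identifies the eigenmeasure foliation and the $\gamma\mapsto\gamma'$ data with those of $h_g$ and concludes from the uniqueness Claim.

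Second, the step you explicitly defer as ``the main obstacle'' --- the dictionary between the band model of \Cref{fig:ay_penner-nonor} and the interval-exchange data of \cite[\S 5]{StrennerSAF} --- is precisely the substantive content of the paper's proof, not a routine normalization: the eigenmeasure $\sum_{i=0}^{g-1}\lambda^i\mu_i$ on the bigon track is redrawn as a union of foliated rectangles glued along intervals of lengths $1,\lambda,\dots,\lambda^{g-1}$ (a cut-open Möbius band with the twisted bands realizing interval identifications by translations), and this picture is matched, up to the scale factor $\lambda^{g-1}$, with Figure~5.1 of \cite{StrennerSAF}. So while your setup points in the right direction, the two load-bearing ingredients --- the explicit isometric identification of the unstable foliations and the determination-by-$(\mathcal{F}^u,\gamma,\gamma')$ claim replacing the false classification principle --- are missing. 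Your closing remark on deducing \Cref{thm:arnoux-yoccoz} by lifting is broadly consistent with the paper, but note the paper still has to determine which lift of $c$ receives the positive twist and which the negative one, which it does by tracking the lift of $\gamma$.
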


The direction of twisting about~$T_c$ is important, see \Cref{fig:nonor-twist}
later for the reason behind this. On a nonorientable surface, there is no
notion of positive or negative twisting, so we specify the direction of the
Dehn twist~$T_c$ by the coloring of the curve~$c$ on \Cref{fig:ay_penner-nonor}
as follows. By cutting out the crosscap in the middle and cutting the twisted
bands, we obtain an orientable surface with an embedding in~$\R^2$ coming from
the figure. Our cut-up surface inherits the orientation of~$\R^2$. The blue and
red parts of our curve indicate the parts where the twisting behaves like a
positive and negative twist, respectively, with respect to the orientation of
the cut-up surface.

In \Cref{prop:power-from-penner} we show that both~$f_g^g$ and~$\tilde{f}_g^g$
arise from Penner's construction. In this sense, the mapping classes~$f_g$ and~$\tilde{f}_g$ 
are analogous to the pseudo-Anosov mapping classes with small
stretch factors constructed by Penner in~\cite{Penner91}.

\subsection*{History and motivation}

The Sah--Arnoux--Fathi invariant (in short, SAF invariant) is an invariant of
interval exchange transformations and measured foliations that measures certain
dynamical properties. In genus~2, the invariant foliations of all pseudo-Anosov
mapping classes have non-vanishing SAF invariant~\cite{McMullen03, Calta04}.
The Arnoux--Yoccoz examples were the first examples for pseudo-Anosov mapping
classes in genus~3 and higher whose invariant foliations had vanishing SAF
invariants. For more on pseudo-Anosov maps with vanishing SAF invariant, 
see~\cite{ArnouxSchmidt09, CaltaSchmidt13, DoSchmidt16, StrennerSAF}.

The Arnoux--Yoccoz examples are interesting also because they are known not to
arise from Thurston's construction~\cite{Thurston88}, the first general
construction of pseudo-Anosov mapping classes. This was shown by Hubert and
Lanneau~\cite{HubertLanneau06}, using the fact that the extension 
fields~$\Q(\lambda_g+\lambda_g^{-1})$ are not totally real, where~$\lambda_g$ 
denotes the stretch factor of~$\tilde{h}_g$. Since the examples that arise from
Thurston's construction have certain special properties (for one, see the next
paragraph), the Arnoux--Yoccoz mapping classes have become valuable exotic
examples.

A further motivation for studying the Arnoux--Yoccoz examples comes from
Teichm\"uller theory. The two invariant measured foliations of a pseudo-Anosov
mapping class give rise to a \emph{singular Euclidean metric} or \emph{flat
  structure} on the surface. The \emph{Veech group} of a flat surface is the
group of its affine symmetries. The infinite cyclic group formed by the powers
of the pseudo-Anosov map is always part of the Veech group. It is not known,
however, whether the Veech group can possibly equal this infinite cyclic 
group~\cite[Problem 6]{HubertMasurSchmidtZorich06}. In many cases (for example, for
pseudo-Anosov maps arising from Thurston's construction), the Veech group is
known to contain free groups, hence it is known not to be cyclic. Since the
Arnoux--Yoccoz examples do not arise from Thurston's construction, presumably
they might be good candidates for cyclic Veech groups. In the~$g=3$ case,
however, Hubert, Lanneau and M\"oller~\cite{HubertLanneauMoller09} found
additional elements in the the Veech group of the Arnoux--Yoccoz flat surface,
hence that Veech group is not cyclic. It remains an open question whether the
Arnoux--Yoccoz surfaces have cyclic Veech groups when~$g\ge 4$.

For other work on the Arnoux--Yoccoz mapping classes and their flat
surfaces, see~\cite{Arnoux88, Bowman10, Bowman13,McMullen15, HooperWeiss18}.

\subsection*{Generalizations}

The Arnoux--Yoccoz example in the $g=3$ case has been generalized by Arnoux and
Rauzy in Section 3 of~\cite{ArnouxRauzy91}, see also Section~4.2 
of~\cite{PoggiaspallaLowensteinVivaldi08}. On the surface~$N_4$, one example in
the Arnoux--Rauzy family is the third power of the Arnoux--Yoccoz mapping 
class~$h_3$, which is conjugate to~$T_{r^{-2}(c)} \circ T_{r^{-1}(c)}\circ T_c$ by
our \Cref{thm:arnoux-yoccoz-nonor}. We believe that other members of
Arnoux--Rauzy family include the mapping classes~$T_{r^{-2}(c)} \circ T_{r^{-1}(c)}\circ T_c^k$ 
where~$k\ge 1$, but we will not give a proof of this.

In a follow-up paper~\cite{LiechtiStrennerMinimal}, we will generalize the
twist-and-rotation construction in \Cref{thm:arnoux-yoccoz-nonor} in a
different way in order to construct minimal pseudo-Anosov stretch factors on
various different nonorientable surfaces. In particular, we will show that the
Arnoux--Yoccoz example~$h_3$ has minimal stretch factor on~$N_4$ among
pseudo-Anosov mapping classes with an orientable invariant foliation.

\subsection*{Acknowledgements}

We are grateful to Pierre Arnoux, Dan Margalit and Thomas Schmidt for helpful
comments on an earlier version of this paper. We also thank the referees for
their constructive feedback.

\section{Background}
\label{sec:background}

\subsection{The orientable Arnoux--Yoccoz examples}
\label{sec:ay-or}

In this section we recall the original construction of the orientable
Arnoux--Yoccoz mapping classes~$\tilde{h}_g$ from~\cite{ArnouxYoccoz81}. We
give this description in order to provide context only. The content of this
section will not be used in the proofs.

Fix some~$g\ge 3$ and let~$\alpha$ be the unique real number in~$(0,1)$ 
satisfying~$\alpha + \cdots + \alpha^g = 1$. We construct a measured 
foliation~$\calF$ on the surface~$S_g$ as follows. Start with the rectangle on
\Cref{fig:orientable-ay} foliated by vertical leaves. Equip~$\calF$ with a
measure so that the width of the rectangle is 2. Identify the two vertical
sides to obtain a foliated annulus. Divide the top and bottom boundary
components of this annulus to~$2g$ intervals each as shown on
\Cref{fig:orientable-ay} and identify each interval on the top side with the
interval on the bottom side that has the same length so that shaded rectangles
above the core of the annulus are joined to shaded rectangles below the core of
the annulus and empty rectangles are joined to empty rectangles. We obtain a
measured foliation~$\calF$ on the surface~$S_g$. The two-sided simple closed
curve~$\gamma$ obtained from the core of the annulus is transverse to~$\calF$
and the first return map of~$\calF$ with respect to~$\gamma$ induces the
subdivision of~$S_g$ into the~$2g$ rectangles shown on
\Cref{fig:orientable-ay}.

\begin{figure}
  \centering
  \begin{tikzpicture}[scale = 12]
    \def\rad{0.004} \def\colorstrength{40} \def\wrappingindex{8}
    \def\height{0.1}

\def\midpointarray{0.1296975/1/1/0.2594/$\alpha$, 0.3890925/1/2/0.2594/$\alpha$, 0.5860758/1/3/0.1346/$\alpha^2$, 0.7206474/1/4/0.1346/$\alpha^2$, 0.8228404/1/5/0.0698/$\alpha^3$, 0.8926548/1/6/0.0698/$\alpha^3$, 0.9456715/1/7/0.0362/$\alpha^4$, 0.9818905/1/8/0.0362/$\alpha^4$, 0.0860758/-1/4/0.1346/$\alpha^2$, 0.2206474/-1/3/0.1346/$\alpha^2$, 0.3228404/-1/6/0.0698/$\alpha^3$, 0.3926548/-1/5/0.0698/$\alpha^3$, 0.4456715/-1/8/0.0362/$\alpha^4$, 0.4818905/-1/7/0.0362/$\alpha^4$, 0.6296975/-1/2/0.2594/$\alpha$, 0.8890925/-1/1/0.2594/$\alpha$}

\def\endpointarray{{0/1/black, 0.259395/1/black/gray/0/, 0.5187901/1/black/white/0/, 0.6533616/1/black/gray/0/, 0.7879332/1/black/white/0/, 0.8577476/1/black/gray/0/, 0.927562/1/black/white/0/, 0.963781/1/black/gray/0/, 1/1/black/white/0/}, {0.0187901/-1/black/gray/0/3.62190772036, 0.1533616/-1/black/white/0/, 0.2879332/-1/black/gray/0/, 0.3577476/-1/black/white/0/, 0.427562/-1/black/gray/0/, 0.463781/-1/black/white/0/, 0.5/-1/black/gray/0/, 0.759395/-1/black/white/0/, 0.0187901/-1/black/gray/0/3.62190772036}}


\foreach \list in \endpointarray{
\def\lastx{}
\def\lastsign{}

\foreach \x/\sign/\notused/\col/\isflipped/\mp[count=\i from 0,remember=\x as \lastx, remember=\sign as \lastsign] in \list{
\ifnum \i > 0

\def\darkcolor{\col!\colorstrength!white}
\ifnum \isflipped = 0 
\def\bp{\colorstrength}
\def\ep{0}
\else
\def\bp{0}
\def\ep{\colorstrength}
\fi

\ifnum \wrappingindex = \i

\ifnum \sign = -1
\fill[left color=\darkcolor, right color=\darkcolor]  (\lastx, 0) rectangle (1,\lastsign*\height);
\fill[left color=\darkcolor, right color=\darkcolor]  (0,0) rectangle (\x, \sign*\height);
\else
\fill[left color=\darkcolor, right color = \darkcolor] (\lastx, 0) rectangle (\x, \sign*\height);
\fi

\else

\fill[left color=\darkcolor, right color = \darkcolor] (\lastx, 0) rectangle (\x, \sign*\height);

\fi
\fi
}
}


\foreach \x in {1,...,200} {
  \draw[ultra thin, gray] (\x/200, -\height) -- (\x/200, \height);
}

\newcommand\definepos[1]{
\ifnum #1 = 1
\def\pos{above}
\else
\def\pos{below}
\fi
}

\foreach \list in \endpointarray{
\foreach \x/\sign in \list{
\draw (\x,0) -- (\x,\sign*\height);
}
}

\begin{scope}[red, ultra thick]
  \draw (0.12, \height) .. controls +(0.05,-0.05) and +(-0.05,-0.05) .. (0.4, \height);
  \draw (0.63, -\height) .. controls +(0.05,+0.05) and +(-0.05,+0.05) .. (0.9, -\height);
\end{scope}

\foreach \sign in {-1,1}{
\draw[dashed] (0,\sign*\height) -- +(1,0);
}

\draw[ultra thick] (0,0) -- (1,0);

\foreach \list in \endpointarray{
\foreach \x/\sign/\singcol in \list{
\filldraw[fill=\singcol, draw=black] (\x,\sign*\height) circle (\rad);
}
}

\foreach \x/\sign/\label/\length/\text in \midpointarray{
\definepos{\sign}
\node at (\x,\sign*\height) [\pos] {\text};
}

\node[right] at (1,0) {$\gamma$};
\node at (0.17, 0.05) {$\gamma'$};

\end{tikzpicture}
  \caption{Construction of the mapping class~$\tilde{h}_4$ by Arnoux and Yoccoz.}
  \label{fig:orientable-ay}
\end{figure}

The key observation is that the two-sided simple closed curve~$\gamma'$ on
\Cref{fig:orientable-ay} is also transverse to~$\calF$, has length~$2\alpha$
and the first return map of~$\calF$ induces a decomposition into rectangles
which is isomorphic to the original decomposition, up to scaling the measure by
a factor of~$\alpha$. Therefore there are homeomorphisms of the surface that
map~$\gamma$ to~$\gamma'$ and~$\calF$ to~$\frac1{\alpha}\calF$. These
homeomorphisms are all isotopic via a leaf-preserving isotopy, therefore they
belong to the same mapping class. The mapping class~$\tilde{h}_g$ is defined to
be this mapping class.

\subsection{The nonorientable Arnoux--Yoccoz examples}
\label{sec:ay-nonor}

In~\cite{StrennerSAF}, the second author has constructed a mapping class~$h_g$
on the closed nonorientable surface~$N_{g+1}$ of genus~$g+1$ in a way that is
analogous to the construction described in \Cref{sec:ay-or}.

\begin{figure}[ht]
  \centering
  \begin{tikzpicture}[scale=12]
    \def\rad{0.004}
    \def\colorstrength{40}
    \def\wrappingindex{2}
    \def\height{0.1}
    \def\midpointarray{0.259395/1/1/0.2594, 0.7781851/1/1/0.2594, 0.1721517/-1/2/0.1346, 0.4412948/-1/2/0.1346, 0.6456808/-1/3/0.0698, 0.7853096/-1/3/0.0698, 0.891343/-1/4/0.0362, 0.963781/-1/4/0.0362}

    \def\endpointarray{{0/1/black, 0.5187901/1/black/yellow/0/, 0.0375801/-1/black/yellow/0/3.6218987807, 0.3067233/-1/black/green/0/, 0.5758664/-1/black/green/0/, 0.7154952/-1/black/blue/0/, 0.855124/-1/black/blue/0/, 0.927562/-1/black/cyan/0/, 1/-1/black/cyan/0/}}

    \foreach \list in \endpointarray{
      \foreach \x/\sign in \list{
        \draw (\x,0) -- (\x,\sign*\height);
      }
    }

    \foreach \sign in {-1,1}{
      \draw[dashed] (0,\sign*\height) -- +(1,0);
    }

    \draw[ultra thick] (0,0) -- (1,0);

    \foreach \list in \endpointarray{
      \foreach \x/\sign/\singcol in \list{
        \filldraw[fill=\singcol, draw=black] (\x,\sign*\height) circle (\rad);
      }
    }


\foreach \x in {1,...,200} {
  \draw[ultra thin, gray] (\x/200, -\height) -- (\x/200, \height);
}

\begin{scope}[red, ultra thick]
  \draw (0.2593, \height) .. controls +(0.05,-0.05) and +(-0.05,-0.05) .. (0.7781, \height);
\end{scope}

  \path (0,\height) -- node[above] {$\alpha$} (0.518,\height);
  \path (0.518,\height) -- node[above] {$\alpha$} (1.03,\height);
  \path (0.03,-\height) -- node[below] {$\alpha^2$} (0.3,-\height);
  \path (0.3,-\height) -- node[below] {$\alpha^2$} (0.57,-\height);
  \path (0.57,-\height) -- node[below] {$\alpha^3$} (0.71,-\height);
  \path (0.71,-\height) -- node[below] {$\alpha^3$} (0.85,-\height);
  \path (0.85,-\height) -- node[below] {$\alpha^4$} (0.92,-\height);
  \path (0.92,-\height) -- node[below] {$\alpha^4$} (1,-\height);

  \node[right] at (1,0) {$\gamma$};
  \node[above] at (0.35,0.02) {$\gamma'$};

  \end{tikzpicture}
  \caption{Construction of the mapping class~$h_4$ by the second author.}
  \label{fig:AY-nonor}
\end{figure}

Consider the rectangle on \Cref{fig:AY-nonor} together with the vertical
measured foliation of width 1, but now identify the two vertical sides with a
flip to obtain a foliated M\"obius strip. Divide the boundary component of
length~2 to intervals as shown on the figure and identify pairs of intervals of
equal length by translations.

Let~$\gamma$ be the core of the M\"obius strip and let~$\gamma'$ be the
one-sided curve in the figure that is also transverse to the foliation~$\calF$.
As before, the first return maps of the foliation on~$\gamma$ and~$\gamma'$
each induce decompositions of the surface into~$g$ foliated rectangles, and there
are homeomorphisms that map~$\gamma$ to~$\gamma'$ and~$\calF$ 
to~$\frac1{\alpha}\calF$. Once again, all these homeomorphisms are isotopic hence
they define the same mapping class. The mapping class~$h_g$ is defined to be
this mapping class.

Our approach for proving that a mapping class constructed in a different way is
conjugate to~$h_g$ is to use the fact (immediate from the construction) that
the triple~$(\calF, \gamma, \gamma')$ uniquely determines~$h_g$ up to
conjugation. To be more specific, we state this fact formally as follows.

\begin{lemma}\label{prop:foliation-and-curve-determine}
  Let~$g\ge 3$ and let~$f_1$ and~$f_2 = h_g$ be pseudo-Anosov mapping classes
  with unstable measured foliations~$\calF_1$ and~$\calF_2$ on the surface~$N_{g+1}$. 
  Let~$\gamma_1$ and~$\gamma_2$ be one-sided simple closed curves on~$N_{g+1}$, 
  transverse to~$\calF_1$ and~$\calF_2$, respectively. If there is a
  homeomorphism~$\phi: N_{g+1} \to N_{g+1}$ such 
  that~$\phi(\calF_1) = \calF_2$,~$\phi(\gamma_1) = \gamma_2$ 
  and~$\phi(f_1(\gamma_1)) = f_2(\gamma_2)$, then~$f_1$ and~$f_2$ are conjugate in
  the mapping class group of~$N_{g+1}$.
\end{lemma}

\subsection{Penner's construction of pseudo-Anosov mapping classes}
\label{sec:penner}

Consider the annulus~$A = \{z \in \C: 1\le |z| \le 2\}$ and define the positive
Dehn twist in~$A$ by the formula~$T(z) = z\cdot e^{2\pi i(1-|z|)}$. Given a
two-sided simple closed curve~$c$ in a surface~$S$, its \emph{marking} is an
embedding~$\phi_c: A \to S$ where~$c$ is the image of the circle~$\{z: |z|=\frac32\}$. 
The Dehn twist about the marked curve~$(c,\phi_c)$ is
defined as~$\phi_c \circ T \circ \phi_c^{-1}$ on~$\phi_c(A)$ and as the
identity otherwise. When the marking is clear from the context, we denote this
Dehn twist simply by~$T_c$. Note that if the surface~$S$ is orientable, then~$T_c$ 
is a positive twist if~$\phi_c$ is orientation-preserving and a negative
twist if~$\phi_c$ is orientation-reversing.

Two marked two-sided simple closed curves~$c_1$ and~$c_2$ are said to
\emph{intersect inconsistently} if~$\phi_{c_1}^{-1} \circ \phi_{c_2}$ is
orientation-reversing at all points where the composition is defined. A pair of
simple closed curves on a surface is in \emph{minimal position} if one cannot
decrease their intersection number by isotoping them. A collection of curves on
a surface is said to \emph{fill} the surface if they are in pairwise minimal
position and the complementary regions of the curves are disks and
once-punctured disks.

Penner gave the following construction of pseudo-Anosov mapping classes in
\cite{Penner88} (see also \cite{Fathi92}).

\begin{theorem}[Penner's construction]
  Let~$c_1, \ldots, c_n$ be a filling collection of pairwise inconsistently
  intersecting marked two-sided simple closed curves on a surface~$S$. Then any
  product of the Dehn twists~$T_{c_1}, \ldots, T_{c_n}$ containing each twist
  at least once is pseudo-Anosov.
\end{theorem}

For an orientable surface, the filling and the inconsistently intersecting
property implies that the collection of curves is a union of two multicurves~$\Gamma_1$ 
and~$\Gamma_2$ and the allowable products contain positive twists
about the curves in~$\Gamma_1$ and negative twists about the curves in~$\Gamma_2$.

One nice property of Penner's construction is that the stretch factor and the
invariant measured foliations are straightforward to compute. We explain the
process briefly here, for more details, 
see~\cite{Penner88,Penner91,Fathi92,StrennerDegrees}. By smoothing out the
intersections of the collection~$\{c_1, \ldots, c_n\}$, one obtains a bigon
track~$\tau$ that is invariant under each twist~$T_{c_i}$, hence under any
product of them as well. This process is illustrated on
\Cref{fig:smoothing-intersections}. Each curve~$c_i$ is carried on~$\tau$ and
its characteristic measure~$\mu_i$ is a 0-1-valued measure on~$\tau$ that takes
the value~1 on the branches of~$\tau$ traversed by~$c_i$ and the value~0 on the
other branches. The cone~$C$ generated by the measures~$\mu_i$ is invariant
under the Dehn twists~$T_{c_i}$. Moreover, a product~$f$ of~$T_{c_i}$
containing all twists at least once acts on~$C$ by a Perron--Frobenius matrix
(a matrix with nonnegative entries that has a power whose entries are all
positive). The largest eigenvalue of this matrix has multiplicity~1 and it is
the stretch factor of~$f$. The corresponding eigenvectors define a positive
measure on~$\tau$ that is unique up to scaling, and this train track measure
defines the unstable foliation of~$f$.

\begin{proposition}\label{prop:power-from-penner}
  The mapping classes~$f_g^g$ and~$\tilde{f}_g^g$ arise from Penner's construction.
\end{proposition}
\begin{proof}
  We have
  \begin{displaymath}
    f_g^g = T_{r^{-(g-1)}(c)} \circ \cdots \circ T_c.
  \end{displaymath}
  and
  \begin{displaymath}
    \tilde{f}_g^g = T_{r^{-(g-1)}(a)}\circ T_{r^{-(g-1)}(b)}^{-1}\circ \cdots \circ
    T_a \circ T_b^{-1} .
  \end{displaymath}

  \Cref{fig:all-curves} shows that the marked curve~$c$ and its rotated copies
  intersect inconsistently (red intersects blue at every intersection).
  Furthermore, any pair of curves intersects exactly once and hence minimally,
  and the complement of the union of the curves consists of discs. 
  Hence~$f_g^g$ indeed arises from Penner's construction.

  In the second case,~$A = \{a, \ldots, r^{g-1}(a)\}$ 
  and~$B = \{b, \ldots, r^{g-1}(b)\}$ are filling multicurves, and we twist only
  positively along curves in~$A$ and negatively along curves in~$B$, 
  hence~$\tilde{f}_g^g$ also arises from Penner's construction.
\end{proof}

\section{Proofs}
\label{sec:nonor}

In this section, we give the proofs of \Cref{thm:arnoux-yoccoz} and
\Cref{thm:arnoux-yoccoz-nonor}.

\begin{proof}[Proof of \Cref{thm:arnoux-yoccoz-nonor}]
  By \Cref{prop:foliation-and-curve-determine}, the proof reduces to the
  study of the unstable foliation of~$f_g$ and the image of the core
  curve~$\gamma$ of the crosscap under~$f_g$. 

  First we describe the unstable foliation of~$f_g$. Although~$f_g$ does not
  arise from Penner's construction, its~$g$th power does, since
  \begin{displaymath}
    f_g^g = T_{r^{-(g-1)}(c)} \circ \cdots \circ T_c.
  \end{displaymath}
  The invariant foliations of~$f_g$ and its powers are the same, therefore we
  may use the process described in \Cref{sec:penner} for~$f_g^g$ to find the
  unstable foliation of~$f_g$.

  \Cref{fig:all-curves} shows the curve~$c$ and its iterates under the rotation~$r$ 
  and \Cref{fig:bigon-track-circular} shows the invariant bigon track of~$f_g^g$ 
  obtained by smoothing out the intersections of these curves. Note
  that this bigon track is invariant not only under all twists~$T_{r^i(c)}$,
  but also under~$r$.

  \begin{figure}[htb]
    \begin{subfigure}[ht]{0.45\linewidth}
      \labellist
      \footnotesize\hair 2pt
      \pinlabel {$c$} at 55 63
      \pinlabel {$r(c)$} [ ] at 30 93
      \pinlabel {$r^{-1}(c)$} [ ] at 127 26
      \endlabellist
      \centering
      \includegraphics[width=\textwidth]{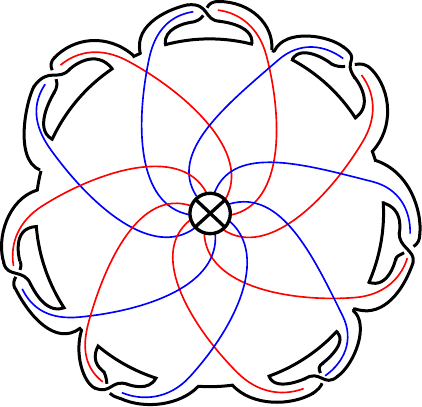}
      \caption{  }
      \label{fig:all-curves}
    \end{subfigure}\hfill
    \begin{subfigure}[ht]{0.45\linewidth}
      \labellist
      \small\hair 2pt
      \pinlabel {$1$} at 35 15
      \pinlabel {$1$} at 70 -5
      \pinlabel {$\lambda^6$} at 3 102
      \pinlabel {$\lambda^6$} at 7 42
      \pinlabel {$\lambda^{5}$} at 33 181
      \pinlabel {$\lambda^{5}$} at 4 131
      \pinlabel {$\lambda^{4}$} at 71 192
      \pinlabel {$\lambda^{4}$} at 128 198
      \pinlabel {$\lambda^{3}$} at 151 187
      \pinlabel {$\lambda^{3}$} at 195 142
      \pinlabel {$\lambda^{2}$} at 203 113
      \pinlabel {$\lambda^{2}$} at 200 55
      \pinlabel {$\lambda$} at 175 18
      \pinlabel {$\lambda$} at 135 -5
      \pinlabel {$\gamma$} at 88 104
      \pinlabel {$\gamma'$} at 30 72
      \endlabellist
      \centering
      \includegraphics[width=\textwidth]{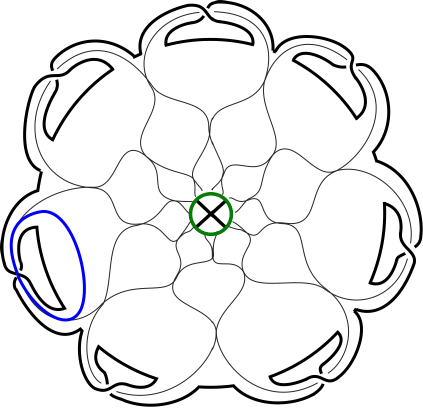}
      \caption{  }
      \label{fig:bigon-track-circular}
    \end{subfigure}
    \caption{}
    \label{fig:smoothing-intersections}
  \end{figure}

  For~$i=0,\dots,g-1$, let~$\mu_i$ be the characteristic measure of the curve~$r^{-i}(c)$. 
  The mapping class~$T_{c}$ acts on the cone~$C$ generated by the
  characteristic measures~$\mu_i$ by the matrix which has~$1$s on the diagonal
  and in the first row, and~$0$s otherwise, and~$r$ acts by a permutation
  matrix. So, for example, when~$g=7$, the acting matrix of~$f_g=r\circ T_c$ on
  the cone~$C$ is
  $$ M_g = \begin{pmatrix}
    0 & 1 & 0 & 0 & 0 & 0 & 0\\
    0 & 0 & 1 & 0 & 0 & 0 & 0\\
    0 & 0 & 0 & 1 & 0 & 0 & 0\\
    0 & 0 & 0 & 0 & 1 & 0 & 0\\
    0 & 0 & 0 & 0 & 0 & 1 & 0\\
    0 & 0 & 0 & 0 & 0 & 0 & 1\\
    1 & 1 & 1 & 1 & 1 & 1 & 1\\
  \end{pmatrix},$$ the companion matrix of the polynomial~$x^7-x^{6}-\cdots-x-1.$ 
  In general, the characteristic polynomial is~$x^g-x^{g-1}-\cdots-x-1.$ 
  The action of~$f_g^g$ on~$C$ is given by~$M_g^g$,
  so by the construction described in \Cref{sec:penner}, the stretch factor and
  the unstable foliation of~$f_g^g$ are the largest eigenvalue and the
  corresponding eigenvector for~$M_g^g$. As a consequence, the stretch factor
  and the unstable foliation of~$f_g$ are given by the largest eigenvalue and
  the corresponding eigenvector for~$M_g$. Hence the stretch factor~$\lambda$
  of~$f_g$ is the largest real root of~$x^g-x^{g-1}-\cdots-x-1$. The
  corresponding eigenvector is~$(1,\lambda,\lambda^2,\dots,\lambda^{g-1})$,
  therefore the unstable foliation of~$f_g$ is given by the
  measure~$\sum_{i=0}^{g-1} \lambda^{i} \mu_i$ on our bigon track. Note 
  that~$\lambda = 1/\alpha$ where~$\alpha$ was defined in \Cref{sec:ay-or}.

  Now we explain how to redraw \Cref{fig:bigon-track-circular}
  analogously to \Cref{fig:AY-nonor}. (Note, however, that
  \Cref{fig:bigon-track-circular} depicts the case~$g=7$, while
  \Cref{fig:AY-nonor} shows the case~$g=4$.) A large regular neighborhood 
  of~$\gamma$ on \Cref{fig:bigon-track-circular} is the disk with the crosscap in
  the middle (hence a M\"obius strip), but without the twisted bands attached.
  Since a bigon in the complement of a bigon track does not give rise to a 
  singularity of the unstable measured foliation, we can isotope the foliation so 
  that its leaves point radially inwards from the boundary of the M\"obius strip, 
  and so that the foliation is nonsingular in the interior of the M\"obius strip.  
  Note that, up to homeomorphism preserving the leaves and the transverse 
  measure of the foliation, every nonsingular measured foliation of the M\"obius 
  strip whose leaves meet the boundary transversally is the vertical foliation 
  of a rectangle with the two vertical sides identified by an involution. This is 
  the model of \Cref{fig:AY-nonor}, but without the identifications on the boundary. 
  Therefore, the only thing left to check is that attaching the twisted bands as in 
  \Cref{fig:bigon-track-circular} induces the correct identifications of the boundary 
  in the M\"obius rectangle model. 
    
  The effect of attaching the twisted bands is that intervals on the boundary
  of the M\"obius strip get identified. The lengths of these intervals with
  respect to the unstable measured foliation are the measures on the 
  branches of the bigon track inside the twisted bands, that 
  is,~$\lambda^{g-1},\lambda^{g-1}, \ldots, 1, 1$. When the M\"obius strip is drawn
  as a rectangle with its vertical sides identified, the~$2g$ intervals of
  length~$\lambda^{g-1},\lambda^{g-1}, \ldots, 1, 1$ appear on the horizontal
  sides. Note that we obtain exactly the pattern of \Cref{fig:AY-nonor}, with the 
  difference that our intervals are a factor~$\lambda^g$ longer. Since the bands 
  on \Cref{fig:bigon-track-circular} are twisted, the pairs of intervals are glued
  together in an orientation-reversing way, just like gluing by translation on
  \Cref{fig:AY-nonor} results in orientation-reversing gluings. Therefore the
  bigon track on \Cref{fig:bigon-track-circular} indeed produces the measured
  foliation pictured on \Cref{fig:AY-nonor}, up to scaling the measure by~$\lambda^g$.

  \begin{figure}[htb]
    \begin{subfigure}{0.3\textwidth}
      \labellist
      \small\hair 2pt
      \pinlabel {$\gamma$} [ ] at 122 92
      \endlabellist
      \includegraphics[width=\textwidth]{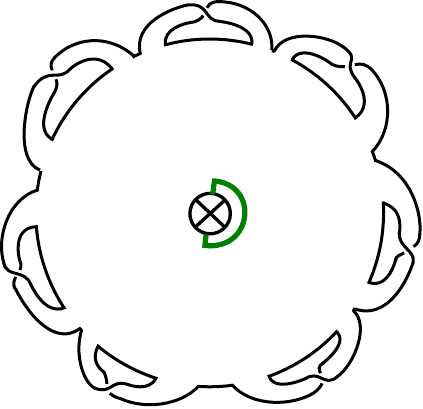}
      \caption{}
      \label{fig:core-curve}
    \end{subfigure}
    \begin{subfigure}{0.3\textwidth}
      \labellist
      \small\hair 2pt
      \pinlabel {$T_c(\gamma)$} [ ] at 97 55
      \endlabellist
      \includegraphics[width=\textwidth]{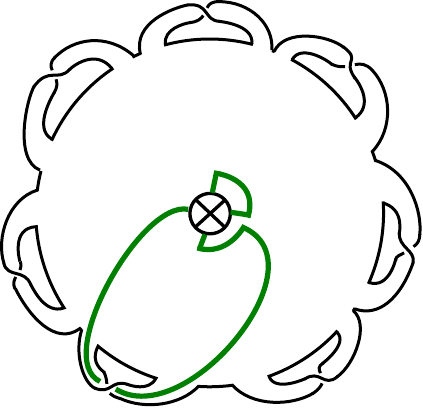}
      \caption{}
      \label{fig:twisted-core-curve}
    \end{subfigure}
    \begin{subfigure}{0.3\textwidth}
      \labellist
      \footnotesize\hair 2pt
      \pinlabel {$\gamma'=r(T_c(\gamma))$} [ ] at 90 60
      \endlabellist
      \centering
      \includegraphics[width=\textwidth]{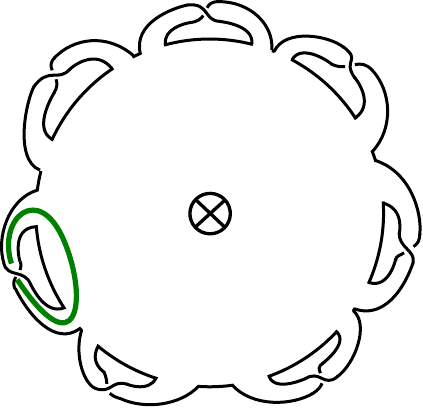}
      \caption{  }
      \label{fig:label}
    \end{subfigure}
    \caption{The curves~$\gamma$,~$T_c(\gamma)$, and~$\gamma'=r(T_c(\gamma))$.
      To go from the second figure to the third, we isotope a small piece 
      of~$T_c(\gamma)$ through the crosscap. This is possible, because the
      antipodal points of the circle containing the X are identified. The
      direction of the twisting about~$c$ is matters: one can compute directly
      that the curve~$T_c^{-1}(\gamma)$ defines an element of the fundamental group 
      that does not correspond to the core curve of one of the twisted bands.}
    \label{fig:nonor-twist}
  \end{figure}

  The curve~$\gamma' = h_g(\gamma)$ on \Cref{fig:AY-nonor} corresponds to the
  curve~$\gamma'$ on \Cref{fig:bigon-track-circular}. It
  remains to show that~$\gamma' = f_g(\gamma)$. After applying the twist~$T_c$
  on the curve~$\gamma$ on \Cref{fig:core-curve}, we obtain the curve shown on
  \Cref{fig:twisted-core-curve}. After rotation by one click, this curve indeed
  maps to~$\gamma'$.
\end{proof}

\begin{proof}[Proof of Theorem~\ref{thm:arnoux-yoccoz}]
  Consider the orientable double cover of the nonorientable surface in
  \Cref{fig:ay_penner-nonor}. One way to construct this covering surface is to
  cut along the twisted bands on \Cref{fig:bigon-track-circular}, remove the
  central crosscap, and glue together two copies of the resulting surface. We
  can think about the two copies as the upper and lower half of the the
  cylinder pictured on \Cref{fig:cylinder}. The upper and lower boundaries of
  this cylinder are subdivided into~$2g$ intervals, which correspond to
  the~$2g$ intervals obtained by cutting the twisted bands.
  \begin{figure}[htb]
    \labellist
    \small\hair 2pt
    \pinlabel {$1$} [ ] at 9 15
    \pinlabel {$2$} [ ] at 27 10
    \pinlabel {$3$} [ ] at 46 8
    \pinlabel {$4$} [ ] at 64 8
    \pinlabel {$14$} [ ] at 8 29
    \pinlabel {$13$} [ ] at 25 34
    \pinlabel {$2$} [ ] at 110 77
    \pinlabel {$1$} [ ] at 95 81
    \pinlabel {$4$} [ ] at 78 83
    \pinlabel {$3$} [ ] at 62 84
    \pinlabel {$13$} [ ] at 113 62
    \pinlabel {$14$} [ ] at 97 59
    \endlabellist
    \centering
    \includegraphics[scale=1.0]{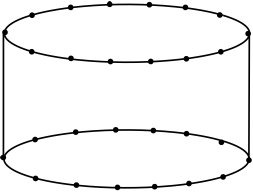}
    \caption{The orientable double cover of the surface on
      \Cref{fig:ay_penner-nonor} when~$g=7$.}
    \label{fig:cylinder}
    \end{figure}
    The orientation-reversing involution of this cylinder that identifies the
    upper and lower half is the reflection about the center of the picture in
    the ambient 3-dimensional space. When the intervals along the boundaries
    are identified in the manner shown, the quotient of the surface is our
    nonorientable surface with the twisted bands, with the boundary collapsed
    to one point.

    Note that the rotation $r$ of $N_{g+1}$ lifts to the rotation of the cylinder by two
    intervals.

    By flattening out the cylinder, we obtain the representation on
    \Cref{fig:annulus}.
\begin{figure}[htb]
  \begin{subfigure}[ht]{0.45\linewidth}
    \labellist
    \small\hair 2pt
     \pinlabel {$13$} [ ] at 10 34
     \pinlabel {$14$} [ ] at 23 16
     \pinlabel {$1$} [ ] at 44 7
     \pinlabel {$2$} [ ] at 73 7
     \pinlabel {$3$} [ ] at 97 20
     \pinlabel {$4$} [ ] at 106 36
     \pinlabel {$3$} [ ] at 45 62
     \pinlabel {$4$} [ ] at 49 66
     \pinlabel {$1$} [ ] at 55 68
     \pinlabel {$2$} [ ] at 62 68
     \pinlabel {$14$} [ ] at 73 71
     \pinlabel {$13$} [ ] at 79 64
    \endlabellist
    \centering
    \includegraphics[width=\textwidth]{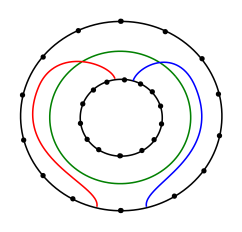}
    \caption{  }
    \label{fig:annulus}
  \end{subfigure}
  \begin{subfigure}{0.45\linewidth}
    \centering
    \includegraphics[width=\textwidth]{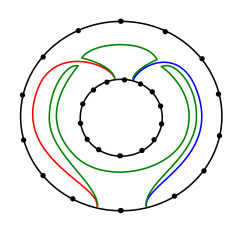}
    \caption{  }
    \label{fig:after_twist_or}
  \end{subfigure}\newline
  \begin{subfigure}[ht]{0.45\linewidth}
    \labellist
    \small\hair 2pt
    \pinlabel {$1$} at 46 6
    \pinlabel {$2$} at 75 7
    \pinlabel {$3$} at 95 19
    \pinlabel {$4$} at 105 35
    \pinlabel {$13$} at 10 32
    \pinlabel {$14$} at 23 16
    \pinlabel {$14$} at 44 50
    \pinlabel {$13$} at 47 44
    \pinlabel {$2$} at 53 41
    \pinlabel {$1$} at 62 41
    \pinlabel {$4$} at 69 43
    \pinlabel {$3$} at 73 49
    \endlabellist
    \centering
    \includegraphics[width=\textwidth]{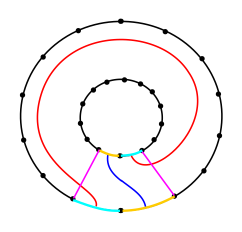}
    \caption{  }
    \label{fig:annulus-rotated}
  \end{subfigure}
  \begin{subfigure}[ht]{0.45\linewidth}
    \includegraphics[width=\textwidth]{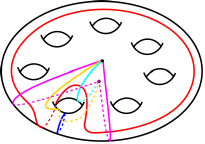}
    \caption{}
    \label{fig:S_7_chopped}
  \end{subfigure}
  \caption{}
  \end{figure}
  The lift of the curve~$c$ along which we twist in the definition of~$f_g$ has
  two components, shown on \Cref{fig:annulus}. A Dehn twist about 
  the curve $c$ on $N_{g+1}$ lifts to the product of a positive twist along one of the lifts of $c$
  and a negative twist about the other lift.

  To find out which twist is positive and which twist is negative, recall 
  that~$T_c(\gamma)$ is a curve that runs in a small neighborhood of one of the
  twisted bands (\Cref{fig:core-curve,fig:twisted-core-curve}). The core curve
  of the annulus on \Cref{fig:annulus} is the lift of~$\gamma$, so its image
  under the two twists should run in a small neighborhood of two consecutive
  intervals of the boundary of the annulus. That happens when the twist is
  positive along the curve on the right and negative along the curve on the
  left on \Cref{fig:after_twist_or}.

  After changing \Cref{fig:annulus} by rotating the inner boundary by 180
  degrees, we obtain the representation shown on \Cref{fig:annulus-rotated}. By
  subdividing this surface along the arcs shown and their rotated copies, we
  can see that this surface can be represented in~$\R^3$ as the surface on
  \Cref{fig:S_7_chopped}. The two twisting curves correspond to the curves
  shown on \Cref{fig:ay_penner}, and we indeed twist positively along the 
  curve~$a$ and negatively along the curve~$b$.
\end{proof}

\subsection*{Remarks on the order of composition and the direction of twisting}

Recall from \Cref{thm:arnoux-yoccoz} the definition of the curves~$a$ and~$b$
and the rotation~$r$. Consider also the curve~$b'$ on \Cref{fig:other-wind}
that winds around the hole to avoid~$a$ in the direction opposite of~$b$.

\begin{figure}[ht]
  \labellist
  \small\hair 2pt
  \pinlabel {$a$} at 33 8
  \pinlabel {$b'$} at 48 12
  \endlabellist
  \centering
  \includegraphics[width=0.4\textwidth]{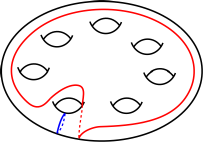}
  \caption{}
  \label{fig:other-wind}
\end{figure}

The following statement summarizes the ways in which the definition
of the mapping class~$\tilde{f}_g = r \circ T_a \circ T_b^{-1}$ is flexible.

\begin{proposition}
  The following statements hold.
  \begin{enumerate}
  \item $T_a$ commutes with both $T_b^{-1}$ and $T_{b'}^{-1}$.
  \item $\tilde{f}_g$ is conjugate to $r^{\pm 1} \circ T_a \circ T_b^{-1}$,
    $T_a \circ T_b^{-1} \circ r^{\pm1}$,
    $r^{\pm 1} \circ T_a^{-1} \circ T_{b'}$ and $T_a^{-1} \circ T_{b'} \circ r^{\pm 1}$.
  \item $\tilde{f}_g^{-1}$ is conjugate to
    $T_a^{-1} \circ T_b \circ r^{\pm1}$, $r^{\pm 1} \circ T_a^{-1} \circ T_b$,
    $T_a \circ T_{b'}^{-1} \circ r^{\pm 1}$ and $r^{\pm 1} \circ T_a \circ T_{b'}^{-1}$.
  \end{enumerate}
\end{proposition}
\begin{proof}
  The first statement holds, because~$a$ is disjoint from~$b$ and $b'$.

  For the first expression in the second statement, note that on
  \Cref{fig:ay_penner}, the rotation by~$180$ degrees about the axis
  intersecting~$a$ and~$b$ symmetrically commutes with both~$T_a$
  and~$T_b^{-1}$, but conjugates~$r$ to~$r^{-1}$. The second expression is
  conjugate to the first by~$r$. For the third and fourth expressions, note
  that the reflection about the plane that intersects all~$g$ holes of the
  surface commutes with~$r$ and conjugates~$T_a$ to~$T_a^{-1}$ and~$T_b$ 
  to~$T_{b'}^{-1}$.

  The third statement follows from the second by taking the inverse.
\end{proof}

To summarize, the order of the two twists, the direction of the rotation and
whether we twist first or rotate first do not matter.

In the nonorientable case, we have the following.

\begin{proposition}
  The mapping class~$f_g$ is conjugate to~$r^{\pm 1} \circ T_c$ and~$T_c \circ r^{\pm 1}$. 
  The inverse~$f_g^{-1}$ is conjugate to~$T_c^{-1} \circ r^{\pm 1}$
  and~$r^{\pm 1} \circ T_c^{-1}$.
\end{proposition}
\begin{proof}
  This follows by conjugating by~$r$ and by an involution of the surface that
  rotates about an axis and leaves the curve~$c$ invariant. This involution
  commutes with~$T_c$ and conjugates~$r$ to~$r^{-1}$.
\end{proof}

We also remark that it can be shown by studying their flat surfaces that~$h_g$
is conjugate to~$h_g^{-1}$ and~$\tilde{h}_g$ is conjugate to~$\tilde{h}_g^{-1}$
when~$g=3$, but not if~$g>3$. Therefore one indeed needs to be careful about
the definitions, because the direction in which~$b$ winds around the hole does
matter.

\bibliographystyle{alpha}
\bibliography{../mybibfile}

\begin{thebibliography}{HMSZ06}

\bibitem[AR91]{ArnouxRauzy91}
Pierre Arnoux and G{{\'e}}rard Rauzy.
\newblock Repr{\'e}sentation g{\'e}om{\'e}trique de suites de complexit{\'e}
  {$2n+1$}.
\newblock {\em Bull. Soc. Math. France}, 119(2):199--215, 1991.

\bibitem[Arn88]{Arnoux88}
Pierre Arnoux.
\newblock Un exemple de semi-conjugaison entre un {\'e}change d'intervalles et
  une translation sur le tore.
\newblock {\em Bull. Soc. Math. France}, 116(4):489--500 (1989), 1988.

\bibitem[AS09]{ArnouxSchmidt09}
Pierre Arnoux and Thomas~A. Schmidt.
\newblock Veech surfaces with nonperiodic directions in the trace field.
\newblock {\em J. Mod. Dyn.}, 3(4):611--629, 2009.

\bibitem[AY81]{ArnouxYoccoz81}
Pierre Arnoux and Jean-Christophe Yoccoz.
\newblock Construction de diff\'eomorphismes pseudo-{A}nosov.
\newblock {\em C. R. Acad. Sci. Paris S\'er. I Math.}, 292(1):75--78, 1981.

\bibitem[Bow10]{Bowman10}
Joshua~P. Bowman.
\newblock Orientation-reversing involutions of the genus 3 {A}rnoux-{Y}occoz
  surface and related surfaces.
\newblock In {\em In the tradition of {A}hlfors-{B}ers. {V}}, volume 510 of
  {\em Contemp. Math.}, pages 13--23. Amer. Math. Soc., Providence, RI, 2010.

\bibitem[Bow13]{Bowman13}
Joshua~P. Bowman.
\newblock The complete family of {A}rnoux-{Y}occoz surfaces.
\newblock {\em Geom. Dedicata}, 164:113--130, 2013.

\bibitem[Cal04]{Calta04}
Kariane Calta.
\newblock Veech surfaces and complete periodicity in genus two.
\newblock {\em J. Amer. Math. Soc.}, 17(4):871--908, 2004.

\bibitem[CS13]{CaltaSchmidt13}
Kariane Calta and Thomas~A. Schmidt.
\newblock Infinitely many lattice surfaces with special pseudo-{A}nosov maps.
\newblock {\em J. Mod. Dyn.}, 7(2):239--254, 2013.

\bibitem[DS16]{DoSchmidt16}
Hieu~Trung Do and Thomas~A. Schmidt.
\newblock New infinite families of pseudo-{A}nosov maps with vanishing
  {S}ah-{A}rnoux-{F}athi invariant.
\newblock {\em J. Mod. Dyn.}, 10:541--561, 2016.

\bibitem[Fat92]{Fathi92}
Albert Fathi.
\newblock D{\'e}monstration d'un th{\'e}or{\`e}me de {P}enner sur la
  composition des twists de {D}ehn.
\newblock {\em Bull. Soc. Math. France}, 120(4):467--484, 1992.

\bibitem[HL06]{HubertLanneau06}
Pascal Hubert and Erwan Lanneau.
\newblock Veech groups without parabolic elements.
\newblock {\em Duke Math. J.}, 133(2):335--346, 2006.

\bibitem[HLM09]{HubertLanneauMoller09}
Pascal Hubert, Erwan Lanneau, and Martin M{\"o}ller.
\newblock The {A}rnoux-{Y}occoz {T}eichm\"uller disc.
\newblock {\em Geom. Funct. Anal.}, 18(6):1988--2016, 2009.

\bibitem[HMSZ06]{HubertMasurSchmidtZorich06}
Pascal Hubert, Howard Masur, Thomas Schmidt, and Anton Zorich.
\newblock Problems on billiards, flat surfaces and translation surfaces.
\newblock In {\em Problems on mapping class groups and related topics},
  volume~74 of {\em Proc. Sympos. Pure Math.}, pages 233--243. Amer. Math.
  Soc., Providence, RI, 2006.

\bibitem[HW18]{HooperWeiss18}
W.~Patrick Hooper and Barak Weiss.
\newblock Rel leaves of the {A}rnoux--{Y}occoz surfaces.
\newblock {\em Selecta Math. (N.S.)}, 24(2):875--934, 2018.

\bibitem[LS20]{LiechtiStrennerMinimal}
Livio Liechti and Bal\'azs Strenner.
\newblock Minimal pseudo-{A}nosov stretch factors on nonoriented surfaces.
\newblock {\em Algebr. Geom. Topol.}, 20(1):451--485, 2020.

\bibitem[Mar19]{MargalitProblems}
Dan Margalit.
\newblock Problems, questions, and conjectures about mapping class groups.
\newblock {\em Proc. Sympos. Pure Math.}, 102:157--186, 2019.

\bibitem[McM03]{McMullen03}
Curtis~T. McMullen.
\newblock Teichm\"uller geodesics of infinite complexity.
\newblock {\em Acta Math.}, 191(2):191--223, 2003.

\bibitem[McM15]{McMullen15}
Curtis~T. McMullen.
\newblock Cascades in the dynamics of measured foliations.
\newblock {\em Ann. Sci. {\'E}c. Norm. Sup{\'e}r. (4)}, 48(1):1--39, 2015.

\bibitem[Pen88]{Penner88}
Robert~C. Penner.
\newblock A construction of pseudo-{A}nosov homeomorphisms.
\newblock {\em Trans. Amer. Math. Soc.}, 310(1):179--197, 1988.

\bibitem[Pen91]{Penner91}
Robert~C. Penner.
\newblock Bounds on least dilatations.
\newblock {\em Proc. Amer. Math. Soc.}, 113(2):443--450, 1991.

\bibitem[PLV08]{PoggiaspallaLowensteinVivaldi08}
G.~Poggiaspalla, J.~H. Lowenstein, and F.~Vivaldi.
\newblock Geometric representation of interval exchange maps over algebraic
  number fields.
\newblock {\em Nonlinearity}, 21(1):149--177, 2008.

\bibitem[Str17]{StrennerDegrees}
Bal\'azs Strenner.
\newblock Algebraic degrees of pseudo-{A}nosov stretch factors.
\newblock {\em Geom. Funct. Anal.}, 27(6):1497--1539, 2017.

\bibitem[Str18]{StrennerSAF}
Bal\'azs Strenner.
\newblock Lifts of pseudo-{A}nosov homeomorphisms of nonorientable surfaces
  have vanishing {SAF} invariant.
\newblock {\em Math. Res. Lett.}, 25(2):677--685, 2018.

\bibitem[Thu88]{Thurston88}
William~P. Thurston.
\newblock On the geometry and dynamics of diffeomorphisms of surfaces.
\newblock {\em Bull. Amer. Math. Soc. (N.S.)}, 19(2):417--431, 1988.

\end{thebibliography}

\end{document}